\newcommand\gl{\mathrel{\mathcal{L}}}
\newcommand\gr{\mathrel{\mathcal{R}}}
\newcommand\mpigr[1][S]{\ensuremath{\mathbf{mPiG}_{\gr}(#1)}}
\newcommand\mpigl[1][S]{\ensuremath{\mathbf{mPiG}_{\gl}(#1)}}
\newcommand\mpig[1][S]{\ensuremath{\mathbf{mPiG}(#1)}}
\newcommand\smpigr[1][S]{\ensuremath{\mathbf{smPiG}_{\gr}(#1)}}
\newcommand\smpigl[1][S]{\ensuremath{\mathbf{smPiG}_{\gl}(#1)}}
\newcommand\smpig[1][S]{\ensuremath{\mathbf{smPiG}(#1)}}
\newcommand\lpg[1][S]{\ensuremath{\mathbf{PiG}_{\gl}}(#1)}
\DeclareMathOperator{\dom}{\mathrm{dom}}
\DeclareMathOperator{\image}{\mathrm{image}}
\DeclareMathOperator{\rank}{\mathrm{rank}}
\theoremstyle{plain}
\newtheorem{theorem}{Theorem}[section]
\newtheorem{proposition}[theorem]{Proposition}
\newtheorem{lemma}[theorem]{Lemma}
\newtheorem{corollary}[theorem]{Corollary}
\theoremstyle{definition}
\newtheorem{definition}[theorem]{Definition} 
\theoremstyle{remark}
\newtheorem{remark}[theorem]{Remark}
\newtheorem{example}[theorem]{Example}
\title[Principal ideal graphs of Inverse Semigroups]{On the structure and  skeletals  of  principal ideal graphs of inverse semigroups}
\author{C S Preenu}
\address{University College Thiruvananthapuram, Kerala, India, 695034}
\email{cspreenu@gmail.com}
\author{R S Indu}
\address{University College Thiruvananthapuram, Kerala, India, 695034}
\email{indurs@universitycollege.ac.in}
\author{Sijo George}
\address{University College Thiruvananthapuram, Kerala, India, 695034}
\begin{document} 

\begin{abstract}
    The principal left ideal graph of a semigroup is a simple graph whose vertices are the non-zero elements of the semigroup, and two vertices are adjacent if their principal left ideals intersect non-trivially. In this paper, we study the structure of the principal ideal graphs of inverse semigroups, particularly symmetric inverse semigroups. We also introduce the concept of skeletal of a graph and show that the principal ideal graph of an inverse semigroup has a skeletal, which is a simple graph with vertex set as $\gl$ classes of non-zero elements. It is also proved that the principal ideal graph of symmetric inverse semigroups has a skeletal which is isomorphic to the intersection graph on the power set of a non-empty set.  
\end{abstract}

\maketitle

\tableofcontents

\section{Introduction}
For a semigroup $S$, the principal left ideal graph $\lpg$ of $S$, defined by the second author and L John \cite{indu2012principal}, is a simple graph with vertex set $S$  and two vertices are adjacent if and only if $S^1a\bigcap S^1b\neq \emptyset$.  Principal right ideal graphs are defined accordingly. It is evident that if $S$ has a two-sided zero,  then $0$ is an element of every principal left (similarly right) ideal of $S$. In this case, $S^1a\bigcap S^1b$ is always nonempty and hence the $\lpg$ is always complete. To overcome this challenge, we modify the definition of principal ideal graph for a semigroup $S$ with or without $0$. We define the modified principal left ideal graph $\mpigl$, with the set of all non-zero elements as vertices, and two elements $a$ and $b$ are adjacent if and only if $S^1a\bigcap S^1b$ contains a non-zero element. $\mpigr$ is defined similarly.  
 Some preliminary concepts are provided in Section 2 and are used in the sequel. In Section \ref{sec:pig}, characteristics of $\mpigl$ and $\mpigr$ are presented. The idea of skeletal and skeleton is introduced in Section \ref{sect:skeletal}, which are homomorphic images of a graph where the homomorphism preserves both adjacency and non-adjacency. A skeletal $\smpigl$ of $\mpigl$ is also discussed in Section \ref{sect:skeletal}.  
In Section \ref{sect:inversesemigroup}, $\mpig$ and $\smpig$ of inverse semigroups are discussed with special focus on symmetric inverse  and Brandt semigroups. 
\section{Preliminaries}
All the results related to semigroup theory and graph theory are based on \cite{howie1995fundamentals,clifford1961algebraic} and \cite{bondy1976graph} respectively.
\emph{Inverse semigroups} are semigroups $S$ in which for every element $x \in S$, there exists a unique  $y \in S$ such that $ x = x y x$ and $ y = y x y$, with $y$ denoted as $x^{-1}$. An element $ e \in S$ is called an idempotent if $ e^2 = e $, and the set of all idempotents $E(S)$ within an inverse semigroup forms a commutative sub-semigroup called a semilattice. For elements  $a, b \in S$, Green's relations $\gl$ and $\gr$ are defined by $a \gl b$ if and only if $S^1 a = S^1 b $, and $ a \gr b$ if and only if $ a S^1 = b S^1 $, where $ S^1$ denotes  $S$ with an identity adjoined if needed; in inverse semigroups, each $\gl$- and $\gr$-class contains exactly one idempotent, specifically $s^{-1} s$ and $s s^{-1}$, respectively, for the class of $s \in S$. The \emph{symmetric inverse} semigroup $\mathcal{I}_X$ on a set $X$ consists of all partial bijections on $X$  under composition, serving as a prototypical example of an inverse semigroup.  
Let $G$ be a group and $I$ a non-empty set.  The \emph{Brandt semigroup} \cite{volkov2019identities} $B(G, I)$ is the set 
$B(G, I) = (I \times G \times I) \cup \{0\}$
with multiplication defined by
$$(i, a, j)(k, b, n) = 
\begin{cases}
(i, a b, n) & \text{if } j = k, \\
0 & \text{otherwise},
\end{cases}$$
and  $0 \cdot x = x \cdot 0 = 0 $ for all $x \in B(G, I).$
 Also, $B(G,I)$ is a completely $0$-simple semigroup and serves as a fundamental example of inverse semigroups with zero.

\section{Principal ideal graphs}\label{sec:pig}
Let $S$ be a semigroup and $a\in S$. The principal left ideal generated by $a$ is $S^1a=\{sa:s\in S \}\bigcup \{a\}$. The \emph{principal left ideal graph}, denoted by $\mpigl$, of $S$ is a simple graph with the set of non-zero elements of $S$ as the vertex set. Two elements $a$ and $b$ are adjacent in $\mpigl$ if and only if $S^1a\bigcap S^1b$ contains a non-zero element. The principal right ideal graph, denoted by $\mpigr$, of a semigroup $S$ can be defined dually, using principal right ideals of $S$. 
\begin{theorem}
    Let $S$ be a semigroup, then we have the following:
    \begin{enumerate}
        \item If $a,b\in S$ are $\gl$ related, then they are adjacent in $\mpigl$ \label{one}. 
        \item If $a, b\in S$ are $\gr$ related, then they are adjacent in $\mpigr$\label{two}.
        \item If $S$ is a monoid, then $\mpigl$ is always connected; the converse need not be true\label{three}.
    \end{enumerate}
\end{theorem}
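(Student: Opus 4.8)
The plan is to treat the three parts separately, with parts \eqref{one} and \eqref{two} being essentially immediate from the definitions and part \eqref{three} requiring a short argument together with a counterexample.

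For part \eqref{one}, I would start from the definition of Green's relation: $a \gl b$ means precisely $S^1 a = S^1 b$. Then the intersection $S^1 a \cap S^1 b$ equals $S^1 a$ itself, which contains $a$. Since $a$ is a vertex of $\mpigl$ it is non-zero, so this intersection contains a non-zero element, and therefore $a$ and $b$ are adjacent. Part \eqref{two} follows by the dual argument, replacing the principal left ideals $S^1 a$ by the principal right ideals $a S^1$ and the relation $\gl$ by $\gr$.

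For the first half of part \eqref{three}, I would single out the identity element $1$ of the monoid $S$. The key observation is that $S^1 1 = S$, so for any non-zero vertex $a$ we have $a \in S^1 a$ and simultaneously $a \in S = S^1 1$; hence $a \in S^1 a \cap S^1 1$, and since $a \neq 0$ this shows that $a$ is adjacent to $1$. Thus $1$ is adjacent to every other vertex, i.e.\ it is a universal vertex, and consequently $\mpigl$ is connected (indeed of diameter at most $2$). The one bookkeeping point to dispatch is the degenerate case in which $1$ coincides with a zero of $S$, which forces $S$ to be trivial and so leaves no non-zero vertices to connect.

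The more interesting half is the converse, for which I would exhibit a semigroup that is \emph{not} a monoid yet has a connected principal left ideal graph. A left-zero semigroup $S$, in which $xy = x$ for all $x,y \in S$, with at least two elements works cleanly: such an $S$ has no identity, yet $S^1 a = S$ for every $a$, so all principal left ideals coincide and $\mpigl$ is in fact complete, hence connected. I expect the main difficulty throughout to be not the algebra but the careful handling of the non-zero requirement in the adjacency condition, together with the edge cases surrounding a possible zero element; once those are pinned down, each assertion reduces to a one-line computation of the relevant principal ideal.
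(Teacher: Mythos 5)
Your proof is correct and takes essentially the same approach as the paper: $\gl$-related elements share a principal left ideal, the identity acts as a universal vertex in the monoid case, and a left-zero semigroup supplies the counterexample (the paper adjoins a zero to it, but the example works either way). Your explicit handling of the non-zero conditions and the degenerate case $1=0$ is slightly more careful than the paper's.
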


\begin{proof}
    Let $S$ be a semigroup and $a,b\in S$. If $a$ and $b$ two non-zero $\gl$-related elements, their principal left ideals are same. Thus, their intersection contains nonzero elements (at least $a$). Therefore, $a$ and $b$ are adjacent in $\mpigl$ and this proves (\ref{one}). Suppose $S$ contains an identity element $1$. Then $Sa = S1\bigcap Sa$ contains at least $a$. Thus, for any nonzero $a$ is adjacent to $1$ in $\mpigl$. Therefore, $\smpigl$ is connected and hence  (\ref{three}) holds. 

    Let $S$ be the left zero semigroup. That is, $ab=a$ for all $a,b\in S$. Then $\mpigl[S^0]$ is a complete graph, where $S^0 = S\bigcup \{0\}$ is  the semigroup $S$ together with 0 is attached.    
\end{proof}

Recall that an involuion $*$ on a semigroup $S$ is an anti-isomorphism on $S$. That is, for all $a,b\in S$,  
$$(ab)^* = b^*a^*\quad \text{ and }\quad (a^*)^* = a.$$
\begin{theorem}
    If $S$ is a semigroup with an involution $*$, then the graphs $\mpigl$ and $\mpigr$ are isomorphic. 
\end{theorem}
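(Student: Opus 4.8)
The plan is to show that the involution itself, realized as the map $\phi\colon a \mapsto a^*$ on the non-zero elements of $S$, is a graph isomorphism from $\mpigl$ to $\mpigr$. First I would observe that $*$ is a bijection of $S$: being an involution it is its own inverse. Moreover it must fix the zero, if one exists, since zeros are preserved by anti-isomorphisms; hence $\phi$ restricts to a bijection of the common vertex set consisting of the non-zero elements of $S$. To speak of principal ideals uniformly I would also note that $*$ extends to $S^1$: if $S$ already has an identity then uniqueness forces $1^*=1$, and if $S$ has no identity then one extends $*$ by decreeing $1^*=1$, which is easily checked to remain an anti-automorphism of $S^1$. In either case $*$ maps $S^1$ bijectively onto itself.

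The crucial step is a duality identity: for every $a \in S$,
$$ (S^1 a)^* = a^* S^1 . $$
Applying $*$ elementwise and using $(xa)^* = a^* x^*$ together with the fact that $*$ maps $S^1$ bijectively onto itself gives $(S^1 a)^* = \{\, a^* x^* : x \in S^1 \,\} = a^* S^1$. Thus the involution carries the principal left ideal of $a$ onto the principal right ideal of $a^*$, which is exactly the correspondence needed to convert intersections of left ideals into intersections of right ideals.

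With this in hand, adjacency transfers directly. If $a$ and $b$ are adjacent in $\mpigl$, there is a non-zero $c \in S^1 a \cap S^1 b$; applying $*$ and the duality identity yields $c^* \in a^* S^1 \cap b^* S^1$, and $c^* \neq 0$ because $*$ is a bijection fixing $0$. Hence $\phi(a)=a^*$ and $\phi(b)=b^*$ are adjacent in $\mpigr$. Since $*$ is an involution, applying the same argument to $a^*,b^*$ (whose images under $*$ are $a,b$) gives the converse implication, so $\phi$ preserves both adjacency and non-adjacency. Being additionally a bijection on vertices, $\phi$ is the desired graph isomorphism.

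The algebraic heart of the argument, the identity $(S^1 a)^* = a^* S^1$, is essentially a one-line computation, so I do not expect it to be the obstacle. The only genuinely delicate points are the bookkeeping around the adjoined identity, namely ensuring that $*$ extends to an anti-automorphism of $S^1$ with $1^*=1$, and confirming that $*$ fixes the zero so that non-zero elements are sent to non-zero elements and the two vertex sets are matched correctly. Once these are settled, the adjacency equivalence follows formally from the duality identity.
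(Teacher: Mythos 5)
Your proposal is correct and uses the same map as the paper, namely the involution $a\mapsto a^*$ itself as the graph isomorphism from \mpigl\ to \mpigr. The paper justifies this in one line via the observation that $a\gl b$ iff $a^*\gr b^*$, whereas you spell out the underlying identity $(S^1a)^*=a^*S^1$ and the transfer of non-zero elements of the intersection, which is a more complete version of essentially the same argument.
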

\begin{proof}
    Let $S$ be a semigroup with an involution. Then note that $a\gl b$ if and only if $a^*\gr b^*$, for any $a,b\in S$. Thus $a\mapsto a^*$ is an isomorphism from $\mpigl$ to $\mpigr$.  
\end{proof}

\section{Skeletal of a graph} \label{sect:skeletal}
In organic chemistry, a skeletal formula, also called bond-line formula or line-angle formula, is a shorthand way to represent the structure of organic molecules. It simplifies drawing complex molecules by showing only the carbon-carbon bonds as lines, and it implies the presence of hydrogen atoms without explicitly drawing them. Motivated by this, here we introduce the concept of skeletal of a graph for representing the simplified version of a graph.

\begin{definition}
    A graph $H$ is said to be a  \emph{skeletal} of a graph $G$ if there is an onto graph homomorphism $\phi:G\to H$ such that for any vertices $a$ and $b$ of $G$ are adjacent in $G$ if and only if $\phi(a)$ and $\phi(b)$ are same or adjacent in $H$. 
\end{definition}

 If $H$ is a skeletal of a graph $G$ under the homomorphism $\phi:G\to H$, then there is a subgraph $H'$ of $G$ such that the restriction of $\phi$ to $H'$ is an isomorphism between $H'$ and $H$.  Recall that an induced subgraph of a graph $G$ is a subgraph formed by a subset of the vertices of $G$ and all the edges connecting pairs of vertices in that subset. For any vertex $v\in vH$, the induced subgraph formed by $\phi^{_1}(v)$ of $G$ is a complete graph.   
\begin{example}
    Consider the graph $G$ given in Figure \ref{fig:example}. The graph $H$ is a skeletal of $G$ under the homomorphism $\phi:G\to H$ defined by $\phi(a)=\phi(b)=\phi(c) = u$,  and $\phi(d)=v$.
    \begin{figure*}[h!]
            \begin{subfigure}[b]{0.4\textwidth}
                \centering
                \begin{tikzpicture}[scale=1]
                    \node[circle,fill, draw,pink] (a) at (.5,-1) {\color{black}$a$};
                    \node[circle,fill,draw,pink] (b) at (1.5,0) {\color{black}$b$};
                    \node[circle,fill,draw,pink] (c) at (.5,1) {\color{black}$c$};
                    \node[circle,fill,draw,green] (d) at (3,0) {\color{black}$d$};
                    \draw (a) -- (b);
                    \draw (b) -- (c);
                    \draw (c) -- (a);
                    \draw (b) -- (d);
                    \draw (a) -- (d); 
                    \draw (c) -- (d);
                \end{tikzpicture}
                \caption*{Graph $G$}
            \end{subfigure} %
            \begin{subfigure}[b]{0.4\textwidth}
                \centering
                \begin{tikzpicture}[scale=1]
                    \node[circle,fill,draw,pink] (u) at (0,0) {\color{black}$u$};
                    \node[circle,fill,draw,green] (v) at (1.5,0) {\color{black}$v$};
                    \draw (u) -- (v);
                \end{tikzpicture}
                \caption*{Graph $H$}
            \end{subfigure}
        \caption{$H$ is a skeletal of $G$.}
        \label{fig:example}
    \end{figure*}
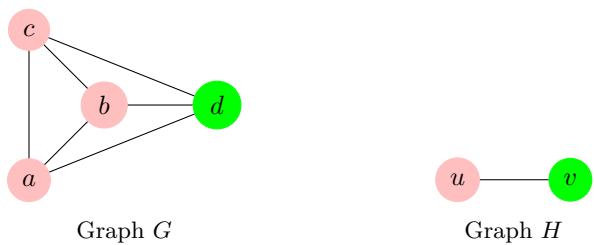
\end{example}

 A graph may have more than one skeletal. For example, the complete graph $K_n$ has $n$ different skeletals, each consisting of a single vertex. Each edge is also a skeletal of any complete graph with more than one vertex.
\begin{proposition}
    A graph $G$ with more than two vertices is complete if and only if $K_2$ is a skeletal of $G$. 
\end{proposition}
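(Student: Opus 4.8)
The plan is to exploit the defining feature of $K_2$: its two vertices are distinct and joined by an edge, so \emph{any} two vertices of $K_2$ are either equal or adjacent. Consequently, in the skeletal equivalence ``$a,b$ adjacent in $G$ $\iff$ $\phi(a),\phi(b)$ same or adjacent in $K_2$,'' the right-hand side is automatically true for every pair. This single observation drives both directions.

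For the direction ``$K_2$ is a skeletal of $G$ $\Rightarrow$ $G$ is complete,'' I would let $\phi\colon G\to K_2$ be a homomorphism witnessing that $K_2$ is a skeletal, and take any two distinct vertices $a,b$ of $G$. Their images $\phi(a),\phi(b)$ are vertices of $K_2$, hence either equal or adjacent. By the skeletal equivalence this forces $a$ and $b$ to be adjacent in $G$. Since $a$ and $b$ were arbitrary, every pair of vertices of $G$ is adjacent, so $G$ is complete.

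For the converse, I would assume $G$ is complete with more than two vertices and construct $\phi$ explicitly. Writing $V(K_2)=\{u,v\}$, choose any partition of the vertex set of $G$ into two nonempty blocks and send one block to $u$ and the other to $v$. This $\phi$ is onto, and each fibre $\phi^{-1}(u)$, $\phi^{-1}(v)$ is a clique because $G$ is complete, so $\phi$ is a graph homomorphism in the (clique-collapsing) sense used in the definition. The skeletal equivalence then holds trivially: for any two vertices, the left-hand side is true by completeness of $G$ and the right-hand side is true because any two vertices of $K_2$ are same or adjacent.

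I do not anticipate a genuine obstacle, since the whole proof is one observation unwound in two directions; the only point needing slight care is checking that the collapsing map qualifies as an admissible onto homomorphism, and it is precisely here that the hypothesis of more than two vertices enters, guaranteeing a nontrivial two-block partition (and hence that $K_2$ arises as a \emph{proper} skeletal rather than merely via the identity).
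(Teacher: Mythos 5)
Your proof is correct and follows essentially the same route as the paper's: the converse direction rests on the single observation that any two vertices of $K_2$ are same or adjacent, and the forward direction exhibits a two-block collapse. You are in fact slightly more careful than the paper, which writes that $\phi(a)$ and $\phi(b)$ ``are adjacent in $K_2$'' where it should say ``same or adjacent,'' and which dismisses the forward direction as clear rather than constructing the partition map.
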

\begin{proof}
When $G$ is a complete graph with more than two vertex, then clearly an edge is a skeletal. To prove the converse, assume that $K_2$ is a skeletal of a graph $G$ under the homomorphism $\phi:G\to K_2$. Then, for any two vertices $a,b\in G$, $\phi(a)$ and $\phi(b)$ are adjacent in $K_2$. Thus $a$ and $b$ are adjacent in $G$. Hence $G$ is a complete graph.
\end{proof}

 Every graph is the skeletal of itself, under the identity isomorphism. A skeletal $H$ of $G$, under $\phi$, is said to be \emph{proper} if the inverse image of atleast one vertex of $H$ under $\phi$ contains more than one vertex of $G$. 
\begin{definition}
    A graph $G$ is said to be a \emph{skeleton} if it has no proper skeletal.
\end{definition}

\begin{theorem}
A tree with more than two vertices is a skeleton. 
\end{theorem}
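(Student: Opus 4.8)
The plan is to exploit the observation recorded just after the definition of skeletal: for any skeletal homomorphism $\phi : G \to H$, every fiber $\phi^{-1}(v)$ induces a complete subgraph of $G$. Indeed, if $\phi(a) = \phi(b)$ then $\phi(a)$ and $\phi(b)$ are the same vertex, so the right-hand side of the skeletal condition holds, and hence $a$ and $b$ must be adjacent in $G$. Now let $T$ be a tree with more than two vertices. Since $T$ is acyclic it contains no triangle, so the only complete subgraphs of $T$ are $K_1$ and $K_2$. Consequently, if $T$ had a \emph{proper} skeletal $\phi : T \to H$, some fiber would contain more than one vertex and would therefore have size exactly two; say $\phi^{-1}(v) = \{a, b\}$ where $ab$ is an edge of $T$.

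Next I would use connectedness together with $|V(T)| > 2$ to manufacture a witness vertex. Because $T$ is connected and possesses a vertex outside $\{a, b\}$, the edge $ab$ cannot by itself form a connected component; hence at least one of its endpoints, say $a$, has a neighbor $c \neq b$. Since $T$ is acyclic, $c$ is \emph{not} adjacent to $b$, for otherwise $a$, $b$, $c$ would close up into a triangle. This is the step where both hypotheses are genuinely consumed: acyclicity forces every nontrivial fiber to be merely an edge, and the cardinality bound guarantees that the extra neighbor $c$ actually exists.

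Finally I would extract the contradiction. As $c \notin \{a,b\} = \phi^{-1}(v)$ we have $\phi(c) \neq v$, while the adjacency of $a$ and $c$ forces $\phi(c)$ and $\phi(a) = v$ to be adjacent in $H$. But then $\phi(b) = v$ and $\phi(c)$ are adjacent in $H$, so the skeletal condition demands that $b$ and $c$ be adjacent in $T$, contradicting the previous paragraph. Thus no fiber of a skeletal of $T$ can contain more than one vertex, so $T$ admits no proper skeletal and is a skeleton. The argument is short once the fiber structure is pinned down; the only point I expect to require a little care is the clean verification that the extra neighbor $c$ must exist and cannot be joined to both $a$ and $b$.
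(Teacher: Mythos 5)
Your proof is correct and follows essentially the same strategy as the paper's: a nontrivial fiber must be a clique, hence an edge $\{a,b\}$ since a tree is triangle-free, and then a third vertex whose image is adjacent to $\phi(a)=\phi(b)$ forces a triangle. The only cosmetic difference is that you locate the extra vertex $c$ as a neighbour of $a$ in the tree and push forward, while the paper picks a neighbour $u$ of $v$ in $H$ and pulls back; both yield the same contradiction.
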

\begin{proof}
Let $G$ be a tree with more than two vertices and $H$ be a proper skeletal of $G$ under the homomorphism $\phi:G\to H$. Since $G$ is not a complete graph, $H$ is not a vertex. Suppose $v\in H$ be the vertex such that $\phi^{-1}(v)$ contains more than one vertex. Since $G$ is a tree, $\phi^{-1}(v)$ contains atmost two vertices. Let $\phi^{-1}(v) =\{a,b \}$. Since $G$ is a tree, $H$ is a connected graph. Let $u$ be a vertex adjacent to $v$ in $H$. Then $\phi^{-1}(u)$, $a$ and $b$ together forms a triangle in $G$. This contradicts the fact that $G$ is a tree. Hence $G$ has no proper skeletal.  
\end{proof} 

By a similar argument, we can show that a cycle with more than three vertices is a skeleton. We state it as a proposition. 
\begin{proposition}
    Every cycle with more than three vertices is a skeleton. 
\end{proposition}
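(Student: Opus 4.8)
The plan is to mirror the argument just given for trees, exploiting the fact that a cycle $C_n$ with $n>3$, like a tree, contains no triangle. Suppose for contradiction that $C_n$ has a proper skeletal $H$ under a homomorphism $\phi\colon C_n\to H$. By the definition of \emph{proper}, at least one fibre $\phi^{-1}(v)$ contains more than one vertex. I would first recall, from the remark following the definition of skeletal, that each fibre induces a complete subgraph of $C_n$: if $a,b\in\phi^{-1}(v)$ with $a\neq b$, then $\phi(a)=\phi(b)$, so by the skeletal condition $a$ and $b$ are adjacent.

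The next step is to pin down the possible sizes of a nontrivial fibre. Since $n>3$, the cycle $C_n$ has no triangle, so its only complete subgraphs are single vertices and single edges. Hence a fibre with more than one vertex consists of exactly two adjacent vertices; write $\phi^{-1}(v)=\{a,b\}$ where $ab$ is an edge of $C_n$.

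I would then produce the forbidden triangle exactly as in the tree case. Because $C_n$ is connected and $\phi$ is onto, $H$ is connected; and because $C_n$ is not complete (a triangle-free graph on more than two vertices cannot be complete), $H$ is not a single vertex. Therefore $v$ has a neighbour $u$ in $H$. Choosing any $c\in\phi^{-1}(u)$ (nonempty since $\phi$ is onto), the skeletal condition forces $c$ to be adjacent to both $a$ and $b$, as $\phi(c)=u$ is adjacent to $\phi(a)=\phi(b)=v$; moreover $c\neq a,b$ since $\phi(c)\neq v$. Thus $a,b,c$ are three distinct vertices forming a triangle in $C_n$, contradicting $n>3$.

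The argument is short because the only real inputs are triangle-freeness together with connectivity and non-completeness. The step most worth stating carefully is the reduction of a nontrivial fibre to a single edge, which must simultaneously invoke that fibres are complete and that $C_n$ has no $K_3$. I expect the only point one should not skip is the verification that $H$ is neither a single vertex nor disconnected, since it is precisely the existence of the neighbour $u$ that manufactures the triangle; once that is in place, the contradiction is immediate.
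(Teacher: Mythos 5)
Your proof is correct and is precisely the argument the paper intends: the paper proves the proposition by remarking that the tree argument carries over, and you have carried it over faithfully (fibres are complete, triangle-freeness forces a nontrivial fibre to be a single edge, and a neighbour of its image manufactures a forbidden triangle). No differences worth noting.
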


\begin{theorem}
    Let $H$ is a skeletal of a graph $G$ under the homomorphism $\phi:G\to H$, then we have the following:
    \begin{enumerate} 
        \item\label{one1} $[\phi^{-1}(v)]_G$ is a complete subgraph of $G$, for each vertex $v$ of $H$.   
        \item\label{two2} There is a subgrah $H'$, isomorphic ot $H$, of $G$.  
        \item\label{three3} If $K$ is a skeletal of $H$ under the homomorphism $\psi:H\to K$, then $K$ is a skeletal of $G$ under the homomorphism $\psi\phi:K\to G$. 
    \end{enumerate}
\end{theorem}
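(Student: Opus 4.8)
The plan is to prove the three parts in order, since each is a fairly direct unwinding of the skeletal definition; the content is organisational rather than deep.

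For part (\ref{one1}), I would take two distinct vertices $a,b \in \phi^{-1}(v)$, so $\phi(a)=\phi(b)=v$. By the defining property of a skeletal, $a$ and $b$ are adjacent in $G$ if and only if $\phi(a)$ and $\phi(b)$ are equal or adjacent in $H$. Since $\phi(a)=\phi(b)$, they are equal, so the right-hand condition holds automatically, forcing $a$ and $b$ to be adjacent in $G$. As $a,b$ were arbitrary distinct vertices of the fibre, every pair in $\phi^{-1}(v)$ is adjacent, so $[\phi^{-1}(v)]_G$ is complete. This is the observation already recorded after the definition, so I would simply formalise it.

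For part (\ref{two2}), I would construct $H'$ by choosing, for each vertex $v$ of $H$, a single representative $r(v) \in \phi^{-1}(v)$ (using that $\phi$ is onto, so each fibre is non-empty). Let $H'$ be the subgraph of $G$ induced on $\{r(v) : v \in vH\}$. The map $v \mapsto r(v)$ is a bijection onto $vH'$ with inverse $\phi$ restricted to $vH'$. To see it is a graph isomorphism I must check adjacency in both directions: $r(u)$ and $r(v)$ are adjacent in $G$ iff $\phi(r(u))=u$ and $\phi(r(v))=v$ are equal or adjacent in $H$; since the representatives are distinct for distinct vertices, $u\neq v$, so the ``equal'' case is excluded and this reduces to $u$ and $v$ being adjacent in $H$. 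Hence $r$ preserves and reflects adjacency, giving $H' \cong H$.

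For part (\ref{three3}), the composite $\psi\phi : G \to K$ is onto (composition of onto maps) and a graph homomorphism (composition of homomorphisms). The only thing to verify is the biconditional adjacency condition. Given $a,b \in vG$, I would argue $a$ and $b$ are adjacent in $G$ iff $\phi(a)$ and $\phi(b)$ are equal or adjacent in $H$ (since $H$ is a skeletal of $G$), and then iff $\psi(\phi(a))$ and $\psi(\phi(b))$ are equal or adjacent in $K$ (since $K$ is a skeletal of $H$). The mild subtlety, and the one place I expect to slow down, is that these two biconditionals do not chain \emph{verbatim}: the middle statement is ``$\phi(a),\phi(b)$ equal or adjacent in $H$'' whereas the skeletal property of $K$ over $H$ speaks of two \emph{vertices} of $H$ being equal or adjacent. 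I would handle this by case analysis: if $\phi(a)=\phi(b)$ then $\psi(\phi(a))=\psi(\phi(b))$ so the $K$-condition holds; if $\phi(a),\phi(b)$ are distinct and adjacent in $H$ then the skeletal property of $K$ gives that $\psi(\phi(a)),\psi(\phi(b))$ are equal or adjacent in $K$; conversely, if $\psi(\phi(a)),\psi(\phi(b))$ are equal or adjacent in $K$ then $\phi(a),\phi(b)$ are equal or adjacent in $H$, whence $a,b$ are adjacent in $G$. (Note the statement as printed writes $\psi\phi:K\to G$, which is a typo for $\psi\phi:G\to K$; I would state the composite with the correct domain and codomain.) Collecting the cases establishes the biconditional, so $K$ is a skeletal of $G$.
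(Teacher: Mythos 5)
Your proposal is correct and follows essentially the same route as the paper: fibres are complete by the ``same or adjacent'' clause, a choice of representatives gives the embedded copy $H'$, and the composite $\psi\phi$ inherits the skeletal property. You are in fact somewhat more careful than the paper in part (\ref{three3}), where you handle the ``equal or adjacent'' disjunction by cases and flag the typo in the direction of $\psi\phi$, but the underlying argument is the same.
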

\begin{proof}
Let $H$ be a skeletal of a graph $G$ under the homomorphism $\phi:G\to H$.  

(\ref{one1}) Let $v$ be a vertex of $H$. If $a,b\in \phi^{-1}(v)$, then $\phi(a)=\phi(b)$. Since $\phi$ preserves adjacency, $a$ and $b$ are  adjacent in $G$. Therefore, $\phi^{-1}(v)$ induces a complete subgraph of $G$. 

(\ref{two2}) For each $v\in H$,  select a vertex $u_v\in \phi^{-1}(v)$. Let $H'$ be the subgraph of $G$ induced by $\{u_v:v\in H \}$. Then, $H'$ and $H$ are isomorphic, since $\phi$ preserves both adjacency and non-adjacency. 

(\ref{three3}) Let $\psi:H\to K$ be a homomorphism under which $K$ is a skeletal of $H$. Then for any two vertices $a,b\in G$, $\psi\phi(a)$ and $\psi\phi(b)$ are adjacent in $K$ if and only if $\phi(a)$ and $\phi(b)$ are adjacent in $H$, if and only if $a$ and $b$ are adjacent in $G$. Similarly, we can show that $\psi\phi(a)$ and $\psi\phi(b)$ are not adjacent in $K$ if and only if  $\phi(a)$ and $\phi(b)$ are not adjacent in $H$, if and only if  $a$ and $b$ are not adjacent in $G$. Thus $\psi\phi:G\to K$ is a homomorphism which preserves both adjacency and non-adjacency. Hence, the result follows.
\end{proof}

The next theorem connects the existence of a proper skeletal with the eigenvalues of the various matrices associated with the graph. For a graph $G$, the adjacency matrix, denoted by $A$, is the  0-1 matrix indexed by the vertex set of $G$, where the entry $A_{ij}$ is 1 if the vertices $i$ and $j$ are adjacent, and 0 otherwise. The Laplacian matrix $L$ of a graph $G$ is defined as $L = D - A$, where $D$ is the degree matrix, a diagonal matrix where each diagonal entry $D_{ii}$ represents the degree of vertex $i$. The signless Laplacian matrix $Q$ is defined as $Q = D + A$. The eigenvalues of these matrices provide insights into various properties of the graph, including its connectivity and structure \cite{das2004laplacian,merris1994laplacian,cvetkovic2010towards}.

\begin{theorem}
    Let  $G$ be a graph and $H$ be a skeletal under the homomorphism $\phi:G\to H$. Suppose  $v\in V(H)$ with  $k=|\phi^{-1}(v)|\geq 2$ and $s=deg(v)$ be the degree of $v$ in $H$. Then the following hold:
    \begin{enumerate}
        \item $-1$ is an eigenvalue of $A$.
        \item $s+1$ is an eigenvalue of $L$ with multiplicity atleast $k-1$.
        \item $s-1$ is an eigenvalue of $Q$ with multiplicity atleast $k-1$. 
    \end{enumerate} 
\end{theorem}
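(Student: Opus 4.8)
The plan is to exhibit an explicit family of eigenvectors supported on the fibre $\phi^{-1}(v)$, built from the twin structure that the skeletal condition forces on that fibre, and to aim the computation squarely at the claimed values $s\pm 1$. Write $\phi^{-1}(v)=\{a_1,\dots,a_k\}$. By part (\ref{one1}) of the previous theorem this set induces a clique in $G$, so the $a_i$ are pairwise adjacent. Moreover, for any $a_i$ and any $w\notin\phi^{-1}(v)$ the skeletal condition gives $a_i\sim w$ in $G$ iff $\phi(a_i)=v$ equals or is adjacent to $\phi(w)$ in $H$; since $\phi(w)\neq v$ this reduces to $v\sim\phi(w)$ in $H$, a condition independent of $i$. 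Hence all $a_i$ have the same neighbours outside the fibre, and being mutually adjacent they are pairwise true twins; in particular they share a common degree $d:=\deg_G(a_i)$. First I would record that, inside $\mathbb{R}^{V(G)}$, the space $W:=\{x : x \text{ is supported on } \phi^{-1}(v) \text{ and } \sum_i x_{a_i}=0\}$ has dimension $k-1$ and is spanned by the differences $e_{a_i}-e_{a_j}$.

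The key computation is the action of $A$ on $W$. For a difference $e_{a_i}-e_{a_j}$ the twin property makes every coordinate outside $\{a_i,a_j\}$ vanish, while the coordinate at $a_i$ is $A_{a_i a_i}-A_{a_i a_j}=0-1=-1$ and symmetrically $+1$ at $a_j$; thus $A(e_{a_i}-e_{a_j})=-(e_{a_i}-e_{a_j})$. Since such differences span $W$, all of $W$ lies in the $(-1)$-eigenspace of $A$, proving (1) (in fact with multiplicity at least $k-1$). For $L=D-A$ and $Q=D+A$ I would then use that $D$ restricts to the scalar $d\,\mathrm{Id}$ on $W$, because all fibre vertices share the degree $d$: for $w\in W$ one has $(Dw)_x=\deg_G(x)\,w_x=d\,w_x$ at fibre vertices and $0=d\,w_x$ elsewhere. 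Consequently $L$ acts on $W$ as $d-(-1)=d+1$ and $Q$ as $d+(-1)=d-1$, so $d+1$ is an eigenvalue of $L$ and $d-1$ an eigenvalue of $Q$, each of multiplicity at least $\dim W=k-1$. This is the natural route to parts (2) and (3), and the same twin differences are the only vectors one would expect to contribute these controlled eigenvalues.

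The hard part — and the genuine obstacle — is that this argument produces the value $d=\deg_G(a_i)$, not $s=\deg_H(v)$, in the conclusions. The two degrees are related by $\deg_G(a_i)=(k-1)+\sum_{u\in N_H(v)}|\phi^{-1}(u)|$: a fibre vertex sees its $k-1$ clique-siblings together with the \emph{entire} fibre over each of the $s$ neighbours of $v$. Since $k\geq 2$ and every fibre is non-empty, $\deg_G(a_i)\geq(k-1)+s>s$, so the claimed values $s+1$ and $s-1$ are never the eigenvalues delivered by $W$; for instance $H=K_2$ with a size-$2$ fibre over $v$ forces $G=K_3$, whose Laplacian spectrum $\{0,3,3\}$ contains $\deg_G(a_i)+1=3$ but not $s+1=2$. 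Thus, to carry the statement through exactly as worded, the symbol $\deg(v)$ entering the conclusions must be read as the common degree $\deg_G(a_i)$ of the fibre vertices rather than as $\deg_H(v)$; with $s=\deg_H(v)$ the conclusions require correction, and reconciling the graph-degree with the skeletal-degree is precisely where the proof stands or falls.
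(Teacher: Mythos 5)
Your computation is correct, and your route is essentially the transpose of the paper's: where you exhibit the explicit eigenvectors $e_{a_i}-e_{a_j}$ supported on the fibre, the paper argues with rows, observing that the $k$ rows of $A+I$ (respectively of $L-(s+1)I$ and $Q-(s-1)I$) indexed by $\phi^{-1}(v)$ are identical, hence these matrices are singular with rank deficiency at least $k-1$. Your difference vectors are precisely the kernel vectors witnessing that row dependence, so the underlying mechanism --- the fibre vertices are pairwise adjacent true twins --- is the same in both arguments.

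More importantly, your diagnosis of the degree mismatch is correct and exposes a genuine error in the paper, in the statement and in its proof alike. The paper's Laplacian step asserts that the diagonal entry at a fibre vertex ``contains the degree $s=\deg(v)$''; but the diagonal of $L$ carries the degree in $G$, which, as you compute, equals $d=(k-1)+\sum_{u\in N_H(v)}|\phi^{-1}(u)|\geq s+1>s$, since $\phi$ is onto (so every fibre is non-empty) and $k\geq 2$. Carried out honestly, the paper's own row argument shows that $L-(d+1)I$ and $Q-(d-1)I$ have $k$ identical rows, proving that $d+1$ and $d-1$ --- not $s+1$ and $s-1$ --- are eigenvalues of $L$ and $Q$ with multiplicity at least $k-1$. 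Your counterexample is conclusive: $K_2$ is a skeletal of $G=K_3$ (the paper itself notes that every edge is a skeletal of a complete graph with more than one vertex), and with a fibre of size $k=2$ over $v$ one has $s=1$, yet the Laplacian spectrum $\{0,3,3\}$ of $K_3$ omits $s+1=2$ and the signless Laplacian spectrum $\{4,1,1\}$ omits $s-1=0$. Part (1) is unaffected, as it does not involve $s$, and both you and the paper in fact obtain it with multiplicity at least $k-1$. The theorem should be restated with $s$ replaced by the common $G$-degree $d$ of the vertices of $\phi^{-1}(v)$; under that reading your proof is complete and correct.
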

\begin{proof}
Let $H$ be a proper skeletal of the graph $G$ under the homomorphism $\phi:G\to H$ and $v$ be a vertex of $H$ such that $\phi^{-1}(v)$ contains at least two vertices of $G$.

Consider the adjacency matrix $A$ of $G$. The rows corresponding to any two vertices of $\phi^{-1}(v)$ are identical, except the diagonal entry. Specifically, these rows have zeros on the diagonal and identical off-diagonal entries 1 due to the identical adjacency relations. Adding the identity matrix $I$ shifts the diagonal entries from $0$ to $1$, so the corresponding rows in $A + I$ are identical. Hence, $A + I$ has at least $k$ identical rows, which are linearly dependent, implying that $A + I$ is singular. Consequently, $-1$ is an eigenvalue of $A$ with multiplicity at least $k-1$.

In the case of the Laplacian matrix, the rows corresponding to any two vertices of $\phi^{-1}(v)$ are identical, except the diagonal entry, which contains the degree $s = \deg(v)$, and have $-1$ entries in positions corresponding to adjacent vertices. Thus, $L-(s+1)I$ has at least two identical rows and hence singular. Therefore, $deg(v)+1$ is an eigenvalue of $L$. Also, if $|\phi^{-1}(v)|=k$, there are $k$ identical rows in $L-(s+1)I$. Thus, the multiplicity of the eigenvalue $deg(v)+1$ is atleast $k-1$. In a similar manner, we can prove the last case also. 
\end{proof}

\subsection{A Skeletal of $\mpig$}
Let $S$ be a semigroup. For $a\in S$, let $L_a$ denotes the $\gl$ class containing $a$. Consider the simple graph $\smpigl$ with vertex set $\{ L_a:0\ne a\in S\}$. Two vertices $L_a$ and $L_b$ are adjacent in $\smpigl$ if and only if $a$ and $b$ are adjacent in $\mpigl$. Similarly, we can also define $\smpigr$. 
\begin{proposition}
    For any semigroup $S$, $\smpigl$ is a skeleton of $\mpigl$.
\end{proposition}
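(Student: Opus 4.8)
The plan is to exhibit $\smpigl$ as a skeletal of $\mpigl$ by means of the natural quotient map $\phi:\mpigl\to\smpigl$ defined on vertices by $\phi(a)=L_a$, and then to verify each clause of the definition of a skeletal for this $\phi$. Before doing so, I would first confirm that the edge set of $\smpigl$ is well defined, i.e.\ that the stated adjacency condition does not depend on the chosen representatives. The key observation is that if $a\gl a'$ then $S^1a=S^1a'$, and similarly $S^1b=S^1b'$ whenever $b\gl b'$; consequently $S^1a\bigcap S^1b=S^1a'\bigcap S^1b'$, so this set contains a non-zero element for one pair of representatives exactly when it does for the other. Hence whether two elements are adjacent in $\mpigl$ depends only on their $\gl$-classes, which is precisely what is needed for the edges $\{L_a,L_b\}$ of $\smpigl$ to be unambiguous.

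Next I would check the defining properties of a skeletal. Surjectivity of $\phi$ is immediate, since every vertex of $\smpigl$ has the form $L_a$ for some non-zero $a$, and $a\in\phi^{-1}(L_a)$. For the required equivalence, suppose first that $a$ and $b$ are adjacent in $\mpigl$: if $L_a=L_b$ then $\phi(a)=\phi(b)$ (the ``same'' alternative), while if $L_a\ne L_b$ then, by construction of $\smpigl$, the vertices $L_a$ and $L_b$ are adjacent; in either case $\phi(a)$ and $\phi(b)$ are same or adjacent. Conversely, if $\phi(a)=L_a$ and $\phi(b)=L_b$ are adjacent in $\smpigl$, then $a$ and $b$ are adjacent in $\mpigl$ directly from the definition of the edge set of $\smpigl$. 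Together with the forward direction, this also shows that $\phi$ carries each edge of $\mpigl$ either to an edge of $\smpigl$ or to a single vertex, so it is an onto homomorphism in the relaxed sense used throughout this section.

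The step I expect to be the main (though modest) obstacle is the ``same'' case of the converse: when $\phi(a)=\phi(b)$ with $a\ne b$, I must certify that $a$ and $b$ are genuinely adjacent in $\mpigl$, so that collapsing them is consistent with the ``same or adjacent'' clause. Here $L_a=L_b$ means $a\gl b$, and this is exactly the content of part~(\ref{one}) of the first theorem, which guarantees that $\gl$-related elements are adjacent in $\mpigl$. This simultaneously confirms, in agreement with part~(\ref{one1}) of the theorem on skeletals, that each fibre $\phi^{-1}(L_a)$ induces a complete subgraph. Having verified surjectivity together with both directions of the adjacency equivalence, I would conclude that $\smpigl$ is a skeletal of $\mpigl$; the dual argument, replacing $\gl$ and $S^1a$ by $\gr$ and $aS^1$, yields the corresponding statement for $\smpigr$ and $\mpigr$.
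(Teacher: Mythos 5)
Your proof is correct and takes essentially the same approach as the paper: the paper's own proof is a one-line appeal to the map $a\mapsto L_a$, and you simply make explicit the details it leaves implicit (well-definedness of the edge set of $\smpigl$ on $\gl$-classes, surjectivity, and the use of the fact that $\gl$-related non-zero elements are adjacent in $\mpigl$ to justify collapsing each fibre). Note only that the statement says ``skeleton'' where, consistent with both your argument and the paper's, it should read ``skeletal''.
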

\begin{proof}
    By the definition of $\smpig$ and $\mpig$, it is clear that $a\mapsto L_a$ is a graph homomorphism, which makes $\smpig$ a skeletal of $\mpig$. 
\end{proof}

\section{$\mpigl$ and $\smpigl$ of Inverse Semigroups}\label{sect:inversesemigroup} 

In this section $S$ denotes an invese semigroup. The map $a\mapsto a^{-1}$ is an involution on $S$ and hence $\mpigl$ and $\mpigr$ are isomorphic. Also, we have $Se\bigcap Sf=Sef$ for any idempotents $e,f\in S$ \cite{howie1995fundamentals}. The follwoing lemma is a consequence of this fact. 
\begin{lemma}
    Two non-zero idempotents $e$ and $f$ are adjacent in $\mpig$ if and only if $ef\ne 0$. 
\end{lemma}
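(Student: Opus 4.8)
The plan is to prove the biconditional by exploiting the fact, quoted immediately before the lemma, that $Se \cap Sf = Sef$ for any idempotents $e,f \in S$. By definition, $e$ and $f$ are adjacent in $\mpigl$ precisely when $S^1e \cap S^1f$ contains a non-zero element. First I would reconcile the two forms of the principal left ideal: since $e$ is idempotent, $e = ee \in Se$, so $S^1e = Se \cup \{e\} = Se$, and likewise $S^1f = Sf$. Hence the intersection $S^1e \cap S^1f$ equals $Se \cap Sf$, which by the stated identity is exactly $Sef$. So adjacency of $e$ and $f$ in $\mpigl$ is equivalent to $Sef$ containing a non-zero element.

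Next I would translate ``$Sef$ contains a non-zero element'' into the cleaner condition ``$ef \ne 0$''. For the easy direction, suppose $ef \ne 0$. In an inverse semigroup the product of two idempotents is again an idempotent (the idempotents form a commutative subsemigroup, the semilattice $E(S)$), so $ef$ is itself an idempotent; if $ef \ne 0$ then $ef = (ef)(ef) \in Sef$, exhibiting a non-zero element of $Sef$, and thus $e,f$ are adjacent. For the converse, suppose $ef = 0$. Then $Sef = S\cdot 0 = \{0\}$ (using that $0$ is a two-sided zero, so $s\cdot 0 = 0$ for all $s$), which contains no non-zero element, so $e$ and $f$ are not adjacent. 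Combining the two directions gives that $e$ and $f$ are adjacent in $\mpigl$ if and only if $ef \ne 0$, and the same argument on the right (or an appeal to the involution $a \mapsto a^{-1}$, noting idempotents are self-inverse and $ef \ne 0 \iff fe \ne 0$ by commutativity of $E(S)$) handles $\mpigr$, so the statement holds in $\mpig$.

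I do not anticipate a serious obstacle here, since the heavy lifting is done by the pre-quoted identity $Se \cap Sf = Sef$. The only point that warrants care is the normalization $S^1e = Se$: one must use idempotency of $e$ (rather than mere membership) to absorb the adjoined identity, and similarly verify that $Sef \setminus \{0\}$ being empty is equivalent to $ef = 0$, which relies on $0$ being a two-sided zero and on $ef$ being an idempotent so that $ef \ne 0$ forces $ef \in Sef$. The most delicate bookkeeping is simply making sure the graph $\mpig$ is stated for both the left and right versions simultaneously; since the commutative semilattice $E(S)$ gives $ef = fe$, the condition $ef \ne 0$ is genuinely side-independent, so a single criterion suffices for $\mpig$ as claimed.
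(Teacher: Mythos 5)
Your proposal is correct and follows the same route as the paper: both reduce adjacency to the identity $Se\cap Sf=Sef$ and then observe that $Sef=\{0\}$ exactly when $ef=0$. You supply a few details the paper leaves implicit (absorbing $S^1e$ into $Se$ via idempotency, and using that $ef$ is itself an idempotent to place it in $Sef$), but the argument is essentially identical.
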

\begin{proof}
Suppose $e$ and $f$ are two non-zero idempotents in an inverse semigroup $S$. We have, $Se\bigcap Sf=Sef$. Since $ef\in Sef$. we can see that $Sef=\{0\}$ if and only if $ef=0$. Thus $e$ and $f$ are adjacent if and only if $ef\ne 0$. 
\end{proof}

Now we prove the condition for the adjacency in general. 
\begin{theorem}
    Let $S$ be an inverse semigroup and two non-zero elements $x$ and $y\in S$ are adjacent in $\mpig$ if and only if 
    $$xy^{-1}\ne 0.$$ 
\end{theorem}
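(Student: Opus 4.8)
The plan is to reduce the adjacency of $x$ and $y$ to the adjacency of the idempotents lying in their $\gl$-classes, apply the preceding Lemma, and then convert the resulting idempotent condition into the single inequality $xy^{-1}\neq 0$ by an elementary computation in $S$.

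First I would exploit that in an inverse semigroup $x\gl x^{-1}x$ and $y\gl y^{-1}y$, so that $S^1x=S^1(x^{-1}x)$ and $S^1y=S^1(y^{-1}y)$ by the definition of $\gl$. Consequently $S^1x\cap S^1y=S^1(x^{-1}x)\cap S^1(y^{-1}y)$, and this common intersection contains a non-zero element precisely when the other does. Hence $x$ and $y$ are adjacent in $\mpig$ if and only if the idempotents $x^{-1}x$ and $y^{-1}y$ are adjacent in $\mpig$. By the Lemma above, the latter happens exactly when $(x^{-1}x)(y^{-1}y)\neq 0$.

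It then remains to establish the key equivalence $(x^{-1}x)(y^{-1}y)\neq 0$ if and only if $xy^{-1}\neq 0$, i.e.\ $xy^{-1}=0 \iff (x^{-1}x)(y^{-1}y)=0$. Writing $e=x^{-1}x$ and $f=y^{-1}y$, I would use the defining inverse-semigroup identities $xe=xx^{-1}x=x$ and $fy^{-1}=y^{-1}yy^{-1}=y^{-1}$ to get $xy^{-1}=(xe)(fy^{-1})=x(ef)y^{-1}$, so that $ef=0$ forces $xy^{-1}=0$. For the converse direction, plain associativity gives $x^{-1}(xy^{-1})y=(x^{-1}x)(y^{-1}y)=ef$, so $xy^{-1}=0$ forces $ef=0$. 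Combining the three steps yields the theorem; since $a\mapsto a^{-1}$ is an involution on $S$, the dual argument using $x^{-1}y$ settles $\mpigr$.

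The only delicate point, and the step I would treat most carefully, is this last equivalence: one must apply the absorption identities on the correct sides, noting that it is the $\gl$-idempotent $x^{-1}x$ (which absorbs $x$ on the \emph{right}) and the $\gl$-idempotent $y^{-1}y$ (which coincides with the left identity of $y^{-1}$, so $fy^{-1}=y^{-1}$) that appear in the product. Everything else follows directly from the reduction to idempotents and from the Lemma already proved.
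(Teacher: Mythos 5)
Your proposal is correct and follows essentially the same route as the paper: reduce to the idempotents $e=x^{-1}x$, $f=y^{-1}y$ via their $\gl$-classes, use $Se\cap Sf=Sef$ (your citation of the Lemma versus the paper's inline computation is only a cosmetic difference), and convert $ef\neq 0$ into $xy^{-1}\neq 0$ by the same left/right multiplication trick. Your write-up is in fact slightly more careful, since you verify both directions of the final equivalence explicitly where the paper only gestures at one.
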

\begin{proof}
    Suppose $x$ and $y$ are two non-zero elements in an inverse semigroup $S$. Then there exist unique idempotents $e=x^{-1}x$ and $f=y^{-1}y$ such that $Sx=Se$ and $Sy=Sf$. Thus we have 
    $$Sx\cap Sy=Se\cap Sf=Sef.$$
    But we have $Sef\ne \{0\}$ if and only if $ef\ne 0$. Therefore $x$
    and $y$ are adjacent in $\mpig$ if and only if $ef\ne 0$, that is if and only if $x^{-1}xy^{-1}y\ne 0.$ But $x^{-1}xy^{-1}y= 0$ if and only if $xy^{-1}=0$ (multiplying with $x$ to the left and $y^{-1}$ to the right). Thus we have $x$ and $y$ are adjacent in $\mpig(S)$ if and only if $xy^{-1}\ne 0$. 
\end{proof}
Since each $\gl$ class contains unique idempotents, we can identify the vertices of $\smpig$ with non-zero idempotents of $S$. Let  $e$ and $f$ are two non-zero idempotents in  $S$, then $L_e$ and $L_f$ are adjacent in $\smpig$ if and only if $ef\ne 0$ in $S$.    Now we state the discussion as a theorem.
\begin{theorem}
    Let $S$ be an inverse semigroup with the semilattice of idempotents $E$. Then
    \begin{enumerate}
        \item $\{L_e: e\in E\}$ is the vertex set of $\smpigl$.
        \item $L_e$ and $L_f$ are adjacent in $\smpigl$ if and only if $ef\ne 0$. \qed 
    \end{enumerate}    
\end{theorem}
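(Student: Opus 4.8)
The plan is to obtain both parts as direct consequences of facts already in hand, so the proof is mainly a matter of assembling the definition of $\smpigl$, the Preliminaries fact that each $\gl$-class of an inverse semigroup contains a unique idempotent, and the preceding Lemma on adjacency of idempotents. For part (1), I would start from the definition: the vertex set of $\smpigl$ is $\{L_a : 0\ne a\in S\}$. The key observation to pin down is that the assignment $e\mapsto L_e$, restricted to the non-zero idempotents of $S$, is a bijection onto this vertex set.

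To establish this, I would first note that every vertex $L_a$ is of the form $L_e$ for an idempotent $e$: by the Preliminaries, the unique idempotent in the class of $a$ is $e=a^{-1}a$. Moreover this $e$ is non-zero whenever $a\ne 0$, since $a=a(a^{-1}a)$ forces $a=0$ if $a^{-1}a=0$. Conversely, each non-zero idempotent $e$ is itself a non-zero element, so $L_e$ is a vertex. Injectivity of $e\mapsto L_e$ is exactly the uniqueness of the idempotent in a $\gl$-class, so the map is a bijection and part (1) follows. One should read ``$e\in E$'' as ranging over the non-zero idempotents: if $S$ has a zero, then $0\in E$ but $L_0=\{0\}$ is excluded from the vertex set by definition, so this caveat is worth flagging explicitly.

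For part (2), I would simply chain the definition with the Lemma. By definition of the skeletal $\smpigl$, the vertices $L_e$ and $L_f$ are adjacent if and only if $e$ and $f$ are adjacent in $\mpigl$; and the Lemma states that two non-zero idempotents $e,f$ are adjacent in $\mpig$ precisely when $ef\ne 0$. Combining the two yields adjacency of $L_e$ and $L_f$ in $\smpigl$ exactly when $ef\ne 0$. No step presents a genuine difficulty; the only point that needs care is the bijection in part (1), specifically the non-vanishing of $a^{-1}a$ and the bookkeeping around the zero idempotent, whereas part (2) is an immediate restatement of the Lemma through the definition of the skeletal.
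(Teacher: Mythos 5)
Your proposal is correct and follows essentially the same route as the paper: the paper likewise identifies vertices with the non-zero idempotents via the uniqueness of the idempotent $a^{-1}a$ in each $\gl$-class and then invokes the preceding Lemma for the adjacency criterion, stating the theorem as a summary of that discussion. Your version merely adds the (worthwhile) explicit checks that $a^{-1}a\ne 0$ when $a\ne 0$ and that the zero idempotent must be excluded from $E$.
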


From the above discussions, we can say that if $S$ is an inverse semigroup without a zero, then $\mpigl$, $\mpigr$, $\smpigl$ and $\smpigr$ are complete graphs.  The Zero-divisor graph of a ring $R$ is defined as a simple graph with the set of all non-zero zero-divisors as the vertex set and two vertices are adjacent if and only if their product is zero \cite{anderson99zerodivisor}.
\begin{remark}\label{remark-one}
  We can identify the skeletal $\smpigl$ of an inverse semigroup $S$ as follows: the semilattice $E=E(S)$ is the vertex set and two vertices are adjacent if and only if their product is non-zero. Thus, the complement of $\smpigl$ is a typical zero-divisor graph, with the only difference being that its vertex set contains all nonzero elements, rather than just the nonzero zero-divisors. 
\end{remark}

\subsection{Symmetric inverse semigroups}
The interest in studying an abstract structure is the existence of interesting examples. One of the major examples of inverse semigroups is the symmetric inverse semigroup \cite{lipscomb1996symmetric}, the collection of all partial one-to-one mappings on a set $X$, denoted by $IS_X$. If $|X|$ is finite, with $n$ elements, we use $IS_n$ to denote $IS_X$. This section studies the properties of $\mpig(IS_n)$. To begin with, we state some properties of $IS_n$; the proof of these statements is omitted since they can be found either in  \cite{ganyushkin2008classical,reilly1965embedding} or a usual easy verification.   
\begin{proposition}[\cite{ganyushkin2008classical}]
    Let $n\in \mathbb N$. Then $IS_n$ has the following properties:
    \begin{enumerate}
        \item $|IS_n| = (n+1)^n$;
        \item $\alpha\gl \beta$ if and only if $\dom \alpha = \dom \beta$;
        \item $\alpha\in IS_n$ is an idempotent if and only if $\alpha$ is the identity on its domain. 
        \item number of idempotents in $IS_n$ is  $2^n$
    \end{enumerate}
\end{proposition}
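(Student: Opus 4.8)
The plan is to treat the four assertions in turn. Throughout write $X=\{1,\dots,n\}$, so that an element $\alpha\in IS_n$ is an injective partial map with domain $\dom\alpha\subseteq X$. For (2) I would bypass the principal left ideals and instead use the idempotent description of $\gl$ recorded in Section 2: in an inverse semigroup $\alpha\gl\beta$ holds if and only if the unique idempotents $\alpha^{-1}\alpha$ and $\beta^{-1}\beta$ of the respective $\gl$-classes coincide. In $IS_n$ the product $\alpha^{-1}\alpha$ is exactly the partial identity $\mathrm{id}_{\dom\alpha}$, so the condition $\alpha^{-1}\alpha=\beta^{-1}\beta$ collapses to $\mathrm{id}_{\dom\alpha}=\mathrm{id}_{\dom\beta}$, which holds precisely when $\dom\alpha=\dom\beta$.

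For (3) the forward direction is trivial, since $\mathrm{id}_A^2=\mathrm{id}_A$. For the converse I would argue from injectivity: assuming $\alpha^2=\alpha$, equality of domains forces $\image\alpha\subseteq\dom\alpha$, and for $x\in\dom\alpha$ the element $z=\alpha(x)$ then satisfies $\alpha(z)=\alpha(\alpha(x))=\alpha(x)=z$; hence $\alpha(x)=z=\alpha(z)$, and injectivity of $\alpha$ gives $x=z=\alpha(x)$, so $\alpha$ fixes its domain pointwise. The only real content is this use of injectivity, which is what distinguishes $IS_n$ from the partial transformation monoid. Item (4) is then a one-line corollary of (3): the idempotents are exactly the $2^n$ partial identities $\mathrm{id}_A$ for $A\subseteq X$, and distinct subsets give distinct maps; equivalently, by (2) the $\gl$-classes, and hence their unique idempotents, are indexed by the $2^n$ possible domains.

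I expect (1) to be the step needing the most care. The natural route is to enumerate by domain: a partial bijection is fixed by a choice of domain $A\subseteq X$ of size $k$ together with an injection $A\to X$, giving $|IS_n|=\sum_{k=0}^n\binom{n}{k}\frac{n!}{(n-k)!}=\sum_{k=0}^n\binom{n}{k}^2k!$. The hard part will be the closed form, and this is exactly where I would be wary: the sum produces $1,2,7,34,\dots$, which is not $(n+1)^n$ (already $|IS_2|=7\neq 9$). The quantity $(n+1)^n$ instead counts all partial transformations of $X$, each of the $n$ points being assigned an image or left undefined, i.e.\ $n+1$ options per point; so I would expect the intended identity to read $|IS_n|=\sum_{k=0}^n\binom{n}{k}^2k!$, with $(n+1)^n$ belonging to the partial transformation monoid rather than to $IS_n$.
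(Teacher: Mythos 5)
The paper offers no proof of this proposition at all --- it is stated with a citation and the remark that the verification is routine --- so there is no argument of the paper's to compare yours against. Your proofs of (2), (3) and (4) are correct and are the standard ones: using $\alpha\gl\beta\iff\alpha^{-1}\alpha=\beta^{-1}\beta$ together with $\alpha^{-1}\alpha=\mathrm{id}_{\dom\alpha}$ for (2), the injectivity argument for (3) (which, as you say, is exactly what fails for arbitrary partial transformations), and the count of partial identities for (4).

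More importantly, your scepticism about item (1) is justified: as stated, the formula is wrong. Counting by domain gives $|IS_n|=\sum_{k=0}^{n}\binom{n}{k}^{2}k!$, which yields $1,2,7,34,\dots$, and already $|IS_2|=7\neq 9=(2+1)^{2}$. The quantity $(n+1)^{n}$ is the order of the full partial transformation monoid $PT_n$ (each point independently gets one of $n$ images or is left undefined), not of the symmetric inverse semigroup. The coincidence at $n=1$ is presumably how the slip survived. Since nothing later in the paper uses the value of $|IS_n|$ --- the degree and edge counts all run over the $2^{n}-1$ non-zero idempotents --- the error is harmless downstream, but item (1) should be corrected to $|IS_n|=\sum_{k=0}^{n}\binom{n}{k}^{2}k!$ or attributed to $PT_n$.
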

Two elements $x$ and $y$ in an inverse semigroup $S$ are adjacent in $\mpig$ if and only if $xy^{-1}\ne 0$ in $S$. Thus in the case of $IS_n$, $xy^{-1}=0$ if and only if $\image(x) \bigcap \dom (y^{-1}) = \emptyset$.  But $\dom(y^{-1}) = \image(y)$. Thus $x$ and $y$ are adjacent if and only if $$\image(x) \bigcap \image(y)\ne \emptyset.$$  
We have proved the following result. 
\begin{theorem}
    Two vertices $x,y\in IS_n$ are adjacent in $\mpig[IS_n]$ if and only if 
    $$\image(x) \bigcap \image(y) \ne \emptyset.  $$ \qed 
\end{theorem}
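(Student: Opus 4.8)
The plan is to reduce the claim to the general adjacency criterion already established for inverse semigroups, namely that two non-zero $x,y\in S$ are adjacent in \mpig{} if and only if $xy^{-1}\ne 0$, and then to spell this condition out concretely in the model $S=IS_n$. First I would recall the relevant features of $IS_n$: its zero is the empty partial map, its product is composition of partial one-to-one maps read left to right (so that $xy^{-1}$ means ``apply $x$, then $y^{-1}$''), and the semigroup inverse $y^{-1}$ coincides with the functional inverse of the partial bijection $y$. In particular $\dom(y^{-1})=\image(y)$ and $\image(y^{-1})=\dom(y)$.

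Next I would determine when the composite $xy^{-1}$ is the empty map. A point $p$ belongs to $\dom(xy^{-1})$ exactly when $p\in\dom(x)$ and the image of $p$ under $x$ lies in $\dom(y^{-1})$; hence the composite is non-empty precisely when some output of $x$ is an input of $y^{-1}$, that is, when $\image(x)\cap\dom(y^{-1})\ne\emptyset$. Substituting $\dom(y^{-1})=\image(y)$ rewrites this as $\image(x)\cap\image(y)\ne\emptyset$. Combining with the general criterion, $x$ and $y$ are adjacent in \mpig[IS_n] if and only if $\image(x)\cap\image(y)\ne\emptyset$, which is the assertion. As a consistency check, this condition is symmetric in $x$ and $y$, matching the symmetry of the general criterion (since $(xy^{-1})^{-1}=yx^{-1}$ and $a=0 \iff a^{-1}=0$ in an inverse semigroup).

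Since every step is a direct computation, I do not expect a genuine obstacle; the only point that requires care is the composition convention. One must fix whether $xy^{-1}$ applies $x$ or $y^{-1}$ first, because the opposite reading would produce the domain-intersection condition $\dom(x)\cap\dom(y)\ne\emptyset$ rather than the image-intersection one. With the left-to-right convention used throughout the paper (so that the relevant overlap is $\image(x)\cap\dom(y^{-1})$), together with the identity $\dom(y^{-1})=\image(y)$, the image form stated in the theorem is the one that results.
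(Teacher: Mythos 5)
Your argument is correct and follows essentially the same route as the paper: invoke the general adjacency criterion $xy^{-1}\ne 0$ for inverse semigroups, observe that in $IS_n$ the composite $xy^{-1}$ is non-empty exactly when $\image(x)\cap\dom(y^{-1})\ne\emptyset$, and conclude via $\dom(y^{-1})=\image(y)$. Your explicit attention to the composition convention is a worthwhile clarification but does not change the substance of the proof.
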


Now we find the degree of each vertex in $\mpig[IS_n]$. Let us use $\rank (\alpha)$ to denote the cardinality of the image of the partial one-to-one mapping $\alpha$ on $X$. 
\begin{theorem}
    Let $\alpha\in IS_n$ be a non-zero idempotent with rank $k$. Then the degree of $\alpha$ in $\smpigl[IS_n]$ is given by 
    $$\deg(\alpha) = 2^n-2^{n-k}-1.$$
\end{theorem}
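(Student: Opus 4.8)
The plan is to convert the problem entirely into the language of subsets of $X$ and then perform a straightforward complementary count. First I would invoke the immediately preceding theorem, which tells us that the vertices of $\smpigl[IS_n]$ are the $\gl$-classes identified with the non-zero idempotents of $IS_n$, and that $L_e$ is adjacent to $L_f$ if and only if $ef\ne 0$. By the cited proposition, an idempotent of $IS_n$ is precisely the identity map on its domain, so the non-zero idempotents correspond bijectively to the non-empty subsets $A\subseteq X$: write $e_A$ for the identity on $A$. Under this correspondence $\rank(e_A)=|A|$, so the given idempotent $\alpha$ of rank $k$ is $e_A$ for some $A$ with $|A|=k$.

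Next I would translate adjacency into set-intersection. For two partial-identity maps, the composite $e_A e_B$ is again a partial identity, namely the identity on $A\cap B$; hence $e_A e_B\ne 0$ exactly when $A\cap B\ne\emptyset$. (This is the $IS_n$ instance of the general lemma that two idempotents are adjacent iff their product is non-zero.) Therefore the neighbours of $\alpha=e_A$ in $\smpigl[IS_n]$ are precisely the idempotents $e_B$ with $B$ a non-empty subset of $X$ satisfying $B\ne A$ and $A\cap B\ne\emptyset$.

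The degree is then obtained by counting these $B$. The cleanest route is complementary counting: among all $2^n$ subsets of $X$, those \emph{disjoint} from $A$ are exactly the subsets of $X\setminus A$, and since $|X\setminus A|=n-k$ there are $2^{n-k}$ of them. Consequently the number of subsets meeting $A$ is $2^n-2^{n-k}$. Every such $B$ is automatically non-empty (as it has an element in common with $A$), so the empty set requires no separate attention. Finally I would remove the single subset $B=A$ itself, which meets $A$ but is not a neighbour in a simple loop-free graph, yielding
$$\deg(\alpha)=2^n-2^{n-k}-1.$$

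I do not expect a genuine obstacle here, as the argument is essentially bookkeeping once the subset dictionary is in place; the only point demanding care is the final tally. One must verify that the count $2^n-2^{n-k}$ already excludes the empty set automatically, and then subtract exactly one—and only one—for the vertex $\alpha$ itself, rather than over- or under-correcting. Getting this off-by-one accounting right is the whole substance of the last step.
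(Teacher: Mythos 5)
Your proof is correct and follows essentially the same route as the paper: identify non-zero idempotents with non-empty subsets of $X$, translate adjacency into non-empty intersection of images, and count by complementation. If anything, your bookkeeping is tidier than the paper's, since you make explicit that the $2^n-2^{n-k}$ subsets meeting $A$ are automatically non-empty and that exactly one further subtraction (for $B=A$ itself) is needed.
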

\begin{proof}
    Let $A$ be the image of the idempotent $\alpha \in IS_n$, so $|A| = k$. The non-zero idempotents in $IS_n$ correspond to the non-empty subsets of $X$, totalling $2^n - 1$. The number of idempotents whose images are disjoint from $A$ is $2^{n-k}$ (all subsets of $X \setminus A$). Therefore, the number of idempotents whose images intersect $A$ is $2^n - 2^{n-k} - 1$ (subtracting the $2^{n-k}$ disjoint subsets). Thus, the degree of $\alpha$ in $\smpigl[IS_n]$ is $2^n - 2^{n-k} - 1$. 
\end{proof}

\begin{corollary}
   The number of edges of $\smpigl[IS_n]$ is given by
   $$\frac{1}{2}\left((2^n-1)^2-(3^n-2^n)\right) $$
\end{corollary}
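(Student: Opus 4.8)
The plan is to count edges via the handshake lemma applied to $\smpigl[IS_n]$, using the degree formula from the preceding theorem. Recall that the vertices of $\smpigl[IS_n]$ are the non-zero idempotents of $IS_n$, which correspond bijectively to the non-empty subsets of $X$; there are $2^n - 1$ of them. For each $k$ with $1 \le k \le n$, the number of idempotents of rank $k$ equals the number of $k$-element subsets of $X$, namely $\binom{n}{k}$, and by the previous theorem each such vertex has degree $2^n - 2^{n-k} - 1$.

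First I would write the sum of all vertex degrees as
$$\sum_{k=1}^{n}\binom{n}{k}\left(2^n - 2^{n-k} - 1\right),$$
so that the number of edges is half this sum. Next I would split the sum into three pieces and evaluate each using standard binomial identities: $\sum_{k=1}^n \binom{n}{k} = 2^n - 1$ handles both the first and the third pieces, while the middle piece is evaluated by the binomial theorem, since $\sum_{k=0}^n \binom{n}{k}2^{n-k} = (1+2)^n = 3^n$, whence $\sum_{k=1}^n \binom{n}{k}2^{n-k} = 3^n - 2^n$ after removing the $k=0$ term.

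Combining these gives the sum of degrees as $2^n(2^n-1) - (3^n - 2^n) - (2^n - 1)$; dividing by two and simplifying then yields the claimed expression $\frac{1}{2}\left((2^n - 1)^2 - (3^n - 2^n)\right)$, since both reduce to $\frac{1}{2}(2^{2n} - 2^n - 3^n + 1)$. There is no serious obstacle here, as the argument is essentially a direct computation; the only point requiring care is to exclude the $k=0$ term throughout, reflecting the fact that the empty subset corresponds to the zero idempotent, which is not a vertex of $\smpigl[IS_n]$. The binomial-theorem evaluation of the middle sum is the one nonobvious step.
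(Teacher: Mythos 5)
Your proposal is correct and follows essentially the same route as the paper: apply the handshake lemma, sum the degrees $2^n-2^{n-k}-1$ over the $\binom{n}{k}$ idempotents of each rank $k\ge 1$, and evaluate via $\sum_{k=1}^n\binom{n}{k}=2^n-1$ and $\sum_{k=1}^n\binom{n}{k}2^{n-k}=3^n-2^n$. The only difference is cosmetic (you split the sum into three pieces where the paper groups $2^n-1$ as a single factor), and your explicit attention to excluding the $k=0$ term is a point the paper leaves implicit.
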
 

\begin{proof}
    The number of edges in $\smpigl[IS_n]$ is half the sum of the degrees of all vertices. The degree of each non-zero idempotent is $2^n - 2^{n-k} - 1$, where $k$ is the rank of the idempotent. The number of non-zero idempotents with rank $k$ is $\binom{n}{k}$. Thus, the total number of edges is given by
    \begin{align*}
        \dfrac{1}{2}\sum_{k=1}^n\binom{n}{k} (2^n-2^{n-k}-1) & = \dfrac{1}{2}(2^n-1)\sum_{k=1}^n \binom{n}{k} -\dfrac{1}{2} \sum_{n=1}^k\binom{n}{k}1^k2^{n-k}\\
        & = \dfrac{1}{2}\left((2^n-1)^2-(3^n-2^n)\right). 
    \end{align*}
    Simplifying this expression gives us the final result.
\end{proof}

\begin{theorem}
    $\smpigl[IS_n]$ is isomorphic to the intersection graph of the non-empty subsets of an $n$-element set.
\end{theorem}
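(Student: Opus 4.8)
The plan is to produce an explicit bijection between the vertex set of $\smpigl[IS_n]$ and the non-empty subsets of $X$, and then to verify that this bijection sends the adjacency relation of $\smpigl[IS_n]$ to the adjacency relation of the intersection graph. First I would invoke the identification established just above the statement: the vertices of $\smpigl$ are the $\gl$-classes $L_e$ indexed by the non-zero idempotents $e\in E(IS_n)$, and $L_e$ and $L_f$ are adjacent if and only if $ef\ne 0$. This reduces the problem to understanding non-zero idempotents of $IS_n$ and their products.

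Next I would use the structural description of idempotents from the preliminary proposition: $\alpha\in IS_n$ is an idempotent exactly when it is the identity on $\dom\alpha$, where for a partial identity one has $\dom\alpha=\image\alpha$. Writing $\mathrm{id}_A$ for the partial identity on a non-empty $A\subseteq X$, the assignment $A\mapsto \mathrm{id}_A$ is clearly well defined, and $\mathrm{id}_A$ is recovered from its domain; hence $e\mapsto \dom e$ is a bijection from the non-zero idempotents of $IS_n$ onto the non-empty subsets of $X$. Composing with $L_e\mapsto e$ gives a bijection from $V(\smpigl[IS_n])$ onto the non-empty subsets of $X$, consistent with the count $2^n-1$ already recorded.

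It then remains to match the edges, and the one computation that does the real work is the product of two partial identities. If $e=\mathrm{id}_A$ and $f=\mathrm{id}_B$, then $ef$ is defined precisely on $A\cap B$ and acts as the identity there, so $ef=\mathrm{id}_{A\cap B}$; consequently $ef\ne 0$ if and only if $A\cap B\ne\emptyset$. Combining this with the adjacency criterion $L_e\sim L_f\iff ef\ne 0$, I conclude that $L_e$ and $L_f$ are adjacent in $\smpigl[IS_n]$ if and only if $\dom e\cap\dom f\ne\emptyset$, which is exactly the edge relation of the intersection graph on the non-empty subsets of $X$. Therefore $L_e\mapsto\dom e$ is a graph isomorphism, as required.

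I do not expect a genuine obstacle here: the argument is an identification rather than a hard computation. The only points requiring care are keeping the vertex labelling coherent and checking the product formula $\mathrm{id}_A\cdot\mathrm{id}_B=\mathrm{id}_{A\cap B}$; since for idempotents $\dom$ and $\image$ agree, indexing by domain or by image yields the same graph, so there is no ambiguity in the isomorphism.
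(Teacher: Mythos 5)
Your proposal is correct and follows essentially the same route as the paper: identify the vertices of $\smpigl[IS_n]$ with the non-zero idempotents, put these in bijection with the non-empty subsets of $X$, and check that adjacency corresponds to non-empty intersection. The only cosmetic difference is that you derive the edge condition from the explicit product formula $\mathrm{id}_A\cdot\mathrm{id}_B=\mathrm{id}_{A\cap B}$ applied to the criterion $ef\ne 0$, whereas the paper cites its earlier image-intersection adjacency theorem; both amount to the same verification.
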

\begin{proof}
    Let $X$ be an $n$-element set. The intersection graph of the non-empty subsets of $X$ has vertices corresponding to each non-empty subset of $X$, with two vertices adjacent if and only if their corresponding subsets intersect. In $\smpigl[IS_n]$, the vertices correspond to non-zero idempotents of $IS_n$, which are in one-to-one correspondence with non-empty subsets of $X$ (the image of the idempotent). Two idempotents are adjacent in $\smpigl[IS_n]$ if and only if their images intersect. Thus, the adjacency condition in both graphs is the same, establishing an isomorphism between $\smpigl[IS_n]$ and the intersection graph of the non-empty subsets of an $n$-element set.
\end{proof}  
\subsection{Two extremes: Semilattices and Brandt Semigroups} 
In this section, we examine two extremal cases for principal ideal graphs of inverse semigroups: semilattices and Brandt semigroups. For semilattices without zero, the skeletal $\smpigl$ is a complete graph, since $ef\in Se\bigcap Sf=Sef$. In contrast, for Brandt semigroups, the skeletal $\smpigl$ is a null graph. By Remark \ref{remark-one}, we have the following proposition.
\begin{proposition}
    When $S$ is a semilattice, then we have the following:
    \begin{enumerate}
        \item $\mpigl$ and $\mpigr$ are isomorphic. 
        \item $\mpigl$ and $\smpigl$ are isomorphic. 
        \item If $S$ has no zero, then $\mpigl$ is a complete graph. 
    \end{enumerate}
\end{proposition}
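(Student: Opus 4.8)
The plan is to handle the three parts separately, exploiting the fact that a semilattice is a commutative inverse semigroup in which every element is idempotent and equal to its own inverse.

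For part (1), I would observe that a semilattice is commutative, so for each $a$ the principal left ideal $S^1a$ and the principal right ideal $aS^1$ coincide. Consequently the adjacency criteria defining $\mpigl$ and $\mpigr$ are literally identical, and the two graphs are equal, hence isomorphic. Alternatively, since $a^{-1}=a$ for every $a$ in a semilattice, the involution $a\mapsto a^{-1}$ is the identity map, and the earlier theorem on semigroups with an involution gives the isomorphism at once.

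For part (2), the decisive point is that every $\gl$-class is a singleton. By the earlier proposition, $a\mapsto L_a$ is a graph homomorphism realising $\smpigl$ as a skeletal of $\mpigl$, so it already preserves both adjacency and non-adjacency; it remains only to verify that it is a bijection. Surjectivity is immediate, and for injectivity I would argue as follows: if $L_a=L_b$ then $a\gl b$, and since each $\gl$-class of an inverse semigroup contains a unique idempotent while in a semilattice \emph{every} element is idempotent, both $a$ and $b$ must be the unique idempotent of their common class, forcing $a=b$. Thus $a\mapsto L_a$ is a bijective homomorphism preserving adjacency in both directions, i.e.\ an isomorphism. (One may equally read this off Remark \ref{remark-one}: there the vertex set of $\smpigl$ is $E=E(S)$, which for a semilattice is all of $S$, with matching adjacency.)

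For part (3), I would invoke the adjacency criterion for inverse semigroups: two non-zero elements $x,y$ are adjacent in $\mpig$ if and only if $xy^{-1}\ne 0$. In a semilattice $y^{-1}=y$, so the condition becomes $xy\ne 0$; equivalently, using $Sx\cap Sy=Sxy$ together with $xy\in Sxy$, adjacency holds precisely when $Sxy$ contains a non-zero element. When $S$ has no zero, no element is zero, so $xy\ne 0$ holds for every pair and $\mpigl$ is complete. None of the three parts poses a genuine obstacle, as each is a specialisation of facts already established for inverse semigroups; the only step requiring slight care is the injectivity argument in part (2), where one must combine the "unique idempotent per $\gl$-class" property with the "every element is idempotent" property before the collapse of $\gl$-classes to singletons becomes apparent.
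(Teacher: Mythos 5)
Your proposal is correct and follows essentially the route the paper intends: the paper gives no explicit proof, merely citing Remark \ref{remark-one} and the observation that $ef\in Se\cap Sf=Sef$, and your argument is exactly the specialisation of those general inverse-semigroup facts (commutativity giving $S^1a=aS^1$, every element idempotent giving singleton $\gl$-classes so that $\smpigl$ coincides with $\mpigl$, and $xy\in Sx\cap Sy$ giving completeness when there is no zero). Your part (2) in particular supplies the injectivity detail that the paper leaves implicit.
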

\begin{theorem}
Let $S= B(G, I) = (I \times G \times I) \cup \{0\}$ be a Brandt semigroup.  Then $\mpigl$ decomposes as a disjoint union of complete graphs indexed by $I$.  Specifically, two vertices $a = (i, g, j)$ and $b = (k, h, l)$ are adjacent if and only if $j = l$. 
\end{theorem}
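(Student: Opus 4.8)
The plan is to reduce everything to the adjacency criterion already established for inverse semigroups, namely that two non-zero elements $x,y$ of an inverse semigroup are adjacent in $\mpig$ if and only if $xy^{-1}\ne 0$. Since $B(G,I)$ is an inverse semigroup (as recorded in the preliminaries), the whole statement will follow once I compute the relevant products of the form $ab^{-1}$ in $B(G,I)$.

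First I would record the inverse of a non-zero element. A direct check against the defining identities $x=xyx$, $y=yxy$ shows that $(i,g,j)^{-1}=(j,g^{-1},i)$; indeed $(i,g,j)(j,g^{-1},i)(i,g,j)=(i,g,j)$ and symmetrically for the other identity, so uniqueness of inverses in an inverse semigroup pins this down. With this in hand, for $a=(i,g,j)$ and $b=(k,h,l)$ I would compute, using the Brandt multiplication rule,
$$ab^{-1}=(i,g,j)(l,h^{-1},k)=\begin{cases}(i,gh^{-1},k) & \text{if } j=l,\\ 0 & \text{otherwise}.\end{cases}$$
Since $(i,gh^{-1},k)$ is a non-zero element of $S$, this shows $ab^{-1}\ne 0$ precisely when $j=l$. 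Applying the adjacency criterion, $a$ and $b$ are adjacent in $\mpig$ if and only if $j=l$, which is exactly the stated condition.

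It then remains to phrase this as the claimed decomposition. I would partition the non-zero vertex set of $\mpig$ according to the third coordinate: for each $j\in I$ set $V_j=\{(i,g,j):i\in I,\ g\in G\}$, so that the vertex set is the disjoint union $\bigcup_{j\in I}V_j$. The adjacency condition says that two distinct vertices are joined by an edge if and only if they lie in the same block $V_j$; hence each $V_j$ induces a complete subgraph and no edges run between different blocks. Therefore $\mpig$ is the disjoint union of these complete graphs, one for each $j\in I$, giving the decomposition indexed by $I$.

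I do not expect a genuine obstacle here: the content is carried entirely by the earlier adjacency theorem, and the only thing requiring care is the bookkeeping of coordinates, namely correctly identifying $b^{-1}=(l,h^{-1},k)$ and checking that the inner indices must match ($j=l$) for the product to be non-zero. One minor point worth a sentence is that the relation described by $j=l$ should be read on \emph{distinct} vertices, so that the conclusion is about the induced complete graphs on the blocks $V_j$ rather than about loops at individual vertices.
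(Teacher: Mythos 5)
Your proof is correct, but it takes a different route from the paper's. You reduce the adjacency question to the general criterion for inverse semigroups established earlier (that $x$ and $y$ are adjacent if and only if $xy^{-1}\ne 0$), which requires you to identify $(i,g,j)^{-1}=(j,g^{-1},i)$ and then compute $ab^{-1}$ via the Brandt multiplication rule; your computations are right, and the partition into blocks $V_j$ by third coordinate correctly yields the disjoint union of cliques. The paper instead works directly from the definition of $\mathbf{mPiG}_{\mathcal{L}}(S)$: it computes the principal left ideals explicitly, $S^{1}(i,g,j)=\{(m,x,j):m\in I,\ x\in G\}\cup\{0\}$, and observes that two such ideals share a nonzero element exactly when the right indices agree. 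The paper's approach is self-contained and has the side benefit of showing that all elements in a block have literally the \emph{same} principal left ideal (so they are $\mathcal{L}$-related), which makes the completeness of each block immediate; your approach is more modular, leaning on the earlier theorem so that the only new work is coordinate bookkeeping, and it would generalize painlessly to any inverse semigroup where products of the form $ab^{-1}$ are easy to evaluate. Your closing remark about reading the condition $j=l$ on distinct vertices is a sensible precaution that the paper leaves implicit.
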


\begin{proof}
Consider two nonzero elements $a = (i, g, j)$ and $b = (k, h, l)$ in $B(G, I)$. Their principal left ideals are
\[
S^{1} a = \{ (m, x, j) : m \in I, x \in G \} \cup \{0\}, \quad
S^{1} b = \{ (n, y, l) : n \in I, y \in G \} \cup \{0\}.
\]

The intersection $S^{1} a \cap S^{1} b$ contains a nonzero element if and only if there exist $m, n \in I$ and $x, y \in G$ such that $(m, x, j) = (n, y, l)$, which implies $j = l$. Hence, $a$ and $b$ are adjacent in $\mpigl$ if and only if their right indices coincide.
This means the graph breaks into disjoint components, each consisting of all elements sharing the same right index. Each component is a complete graph because, for any two elements with the same right index, their principal left ideals are same, ensuring adjacency. Thus, $\mpigl$ decomposes into cliques indexed by $I$, with each clique corresponding to a fixed right index.
\end{proof}

Now we consider the skeletal $\smpigl$ of a Brandt semigroup $S=B(G,I)$. By Remark \ref{remark-one}, we can identify $\smpigl$ as follows:
The set of non-zero idempotents is the vertex set, and two idempotents are adjacent if and only if their product is zero. 
\begin{proposition}
    Let $S$ be a Brandt semigroup, then $\smpigl$ and $\smpigr$ are null graphs. 
\end{proposition}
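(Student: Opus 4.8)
The plan is to identify the idempotents of a Brandt semigroup $S = B(G,I)$ explicitly, compute their pairwise products, and then apply Remark~\ref{remark-one}, which tells us that $L_e$ and $L_f$ are adjacent in $\smpigl$ precisely when $ef \neq 0$. So the whole question reduces to showing that the product of any two distinct non-zero idempotents is $0$, i.e.\ that no two distinct idempotents are adjacent, which is exactly the assertion that $\smpigl$ is a null graph.

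First I would determine the idempotents. For an element $(i,g,j)$ to be idempotent we need $(i,g,j)(i,g,j) = (i,g,j)$; by the multiplication rule this product is non-zero only when $j = i$, in which case it equals $(i,g^2,i)$, and idempotency forces $g^2 = g$, hence $g = e_G$, the identity of $G$. Therefore the non-zero idempotents are exactly the elements $e_i := (i,e_G,i)$ for $i \in I$, one for each index. This matches the known fact that $B(G,I)$ is a fundamental inverse semigroup whose idempotents biject with $I$.

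Next I would compute the product of two such idempotents. For $i \neq k$ we have $e_i e_k = (i,e_G,i)(k,e_G,k)$, and since the inner indices $i$ and $k$ differ, the multiplication rule gives $e_i e_k = 0$. Thus distinct non-zero idempotents always multiply to $0$. By Remark~\ref{remark-one}, the vertices $L_{e_i}$ and $L_{e_k}$ of $\smpigl$ are adjacent if and only if $e_i e_k \neq 0$; since this product is zero for all $i \neq k$, there are no edges, and $\smpigl$ is a null graph on the vertex set $\{L_{e_i} : i \in I\}$. The corresponding statement for $\smpigr$ follows either by the same idempotent computation applied on the right, or more cleanly by invoking the involution $a \mapsto a^{-1}$ on the inverse semigroup $S$, under which $\smpigl$ and $\smpigr$ are isomorphic, so one is a null graph exactly when the other is.

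I do not anticipate a genuine obstacle here: the argument is a short direct computation once the idempotents are pinned down, and the main theorem of the subsection has already done the conceptual work by reducing adjacency to non-vanishing of products. The only point requiring a word of care is the boundary case $|I| = 1$, where there is a single idempotent and $\smpigl$ is trivially a single isolated vertex (still a null graph), so the statement holds uniformly; I would note this in passing rather than treating it separately.
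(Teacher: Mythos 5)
Your proposal is correct and follows essentially the same route as the paper: identify the non-zero idempotents of $B(G,I)$ as $(i,e_G,i)$, observe that distinct idempotents multiply to zero because their inner indices differ, and conclude via Remark~\ref{remark-one} that no two vertices of $\smpigl$ are adjacent. The only difference is that you spell out the derivation of the idempotents and the $\smpigr$ case explicitly, which the paper leaves implicit.
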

\begin{proof}
    Since two distinct idempotents $(i,e,i)$ and $(j,e,j)$ are adjacent in $\smpigl$ if and only if their product is non-zero. But, in Brandt semigroup product of distinct idempotents is zero. Thus, no two distinct idempotents are adjacent and hence the result. 
\end{proof}
\nocite{*}
 \bibliographystyle{amsplain} 
\bibliography{books.bib} 
\end{document}